\newtheorem{theorem}{Theorem}
\newtheorem{lemma}{Lemma}       
\newtheorem{corollary}{Corollary} 
\newtheorem{proposition}{Proposition} 
\newtheorem{assumption}{Assumption}
\newtheorem{definition}{Definition}
\DeclareMathAlphabet{\mathcal}{OMS}{cmsy}{m}{n}
\title{\LARGE \bf
Exact Learning of Linear Model Predictive Control Laws \\ using Oblique Decision Trees with Linear Predictions
}
\author{ \parbox{4 in}{\centering Jiayang Ren, Qiangqiang Mao, Tianwei Zhao, Yankai Cao$^\dagger$	\\
        Department of Chemical and Biological Engineering\\
        University of British Columbia, Vancouver, Canada\\
        { $^\dagger$ \tt\small yankai.cao@ubc.ca}}
}
\begin{document}
\maketitle
\thispagestyle{empty}
\pagestyle{empty}

\begin{abstract}
Model Predictive Control (MPC) is a powerful strategy for constrained multivariable systems but faces computational challenges in real-time deployment due to its online optimization requirements. While explicit MPC and neural network approximations mitigate this burden, they suffer from scalability issues or lack interpretability, limiting their applicability in safety-critical systems. This work introduces a data-driven framework that directly learns the Linear MPC control law from sampled state-action pairs using Oblique Decision Trees with Linear Predictions (ODT-LP), achieving both computational efficiency and interpretability. By leveraging the piecewise affine structure of Linear MPC, we prove that the Linear MPC control law can be replicated by finite-depth ODT-LP models. We develop a gradient-based training algorithm using smooth approximations of tree routing functions to learn this structure from grid-sampled Linear MPC solutions, enabling end-to-end optimization. Input-to-state stability is established under bounded approximation errors, with explicit error decomposition into learning inaccuracies and sampling errors to inform model design. Numerical experiments demonstrate that ODT-LP controllers match MPC's closed-loop performance while reducing online evaluation time by orders of magnitude compared to MPC, explicit MPC, neural network, and random forest counterparts. The transparent tree structure enables formal verification of control logic, bridging the gap between computational efficiency and certifiable reliability for safety-critical systems.
\end{abstract}

\section{Introduction}\label{sec:intro}

Model Predictive Control (MPC) is an optimization-based control strategy widely employed for regulating multivariable systems under operational constraints. However, its practical implementation requires solving an optimization problem at each time step, rendering it computationally demanding for real-time applications. This challenge is particularly acute in systems with fast dynamics or high-dimensional models. While advances in optimization solvers have improved the feasibility of MPC implementations \cite{kochProgress2022}, computational complexity remains a critical bottleneck in many scenarios.

To address these limitations, explicit MPC \cite{bemporadModelPredictiveControl2002, bemporad_explicit_2002, alessioSurveyExplicitModel2009, summers_multi_2011, bemporadMPQP2015} represents a prominent approach for reducing online computation. By precomputing the control law offline via multiparametric programming, explicit MPC expresses the solution as a piecewise function of system states, reducing online execution to a region-identification task. Despite eliminating real-time optimization, this method suffers from exponential growth in offline computational and memory requirements, which scale exponentially with the prediction horizon and the number of states. Consequently, explicit MPC is typically applicable only to small-scale systems or problems with limited complexity.

For large-scale systems, Neural Networks (\texttt{NN}) have emerged as an alternative for approximating MPC policies \cite{parisiniRecedinghorizon1995, hertneckLearning2018, kumar_industrial_2021, okamoto_deep_2024}. Leveraging their universal approximation capabilities \cite{hornikMultilayer1989}, \texttt{NN}s learn mappings between system states and optimal control inputs, enabling rapid online inference. However, the black-box nature of \texttt{NN}s compromises transparency, hindering verification of reliability, safety, and decision logic \cite{rudin_stop_2019}. These limitations pose significant risks in safety-critical domains such as autonomous systems and chemical plants, where interpretability is paramount for compliance and operational trust.

\begin{figure}
    \centering
    \includegraphics[width=0.8\columnwidth]{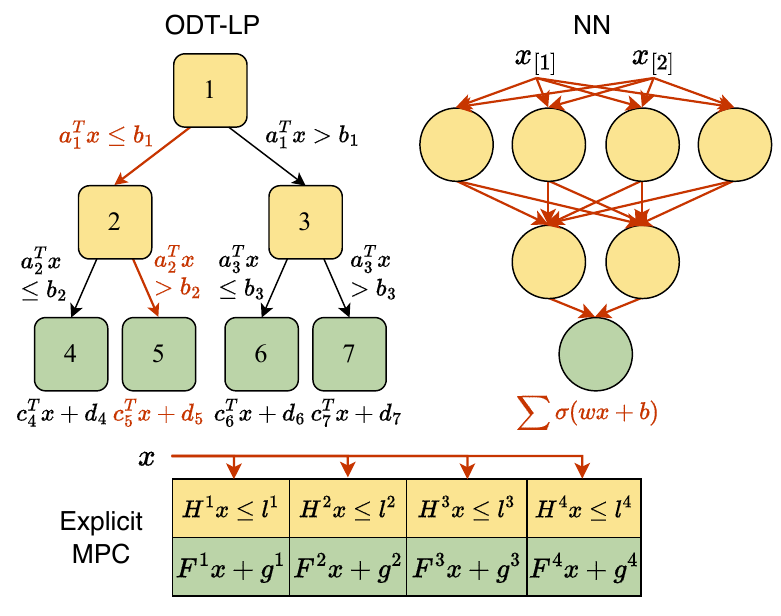}
    \vskip -0.1in
    \caption{Examples of \texttt{ODT-LP}, \texttt{NN}, and Explicit MPC control laws. (Red paths show the activated paths needed to obtain actions. ODT-LP visits one path, NN visits all paths, and explicit MPC visits all regions in worst cases.)}
    \label{fig:dt}
    \vskip -0.2in
\end{figure}

To bridge this gap, we propose a data-driven framework that directly learns the MPC control law from sampled state-action pairs using oblique decision trees with linear predictions (\texttt{ODT-LP}), eliminating the need for explicit form computation. As shown in Fig.~\ref{fig:dt}, \texttt{ODT-LP} partitions the state space using interpretable rules and employs linear models at leaf nodes for predictions, ensuring transparency in control logic. Comparing to \texttt{NN}s with equivalent parameters, \texttt{ODT-LP} achieves faster inference by activating only one path (red paths in Fig.~\ref{fig:dt}) to the leaf node, bypassing full-path computation in \texttt{NN}s. The main contributions are:  
\begin{itemize}
    \item A demonstration that Linear MPC control laws can be exactly represented by \texttt{ODT-LP};  
    \item A data-driven learning framework for \texttt{ODT-LP}-based MPC that uses grid-sampled state-action datasets and gradient-based optimization to train the decision tree.
    \item Numerical validations shows that \texttt{ODT-LP} can exactly learn the Linear MPC control law, achieve faster online implementation than explicit MPC and \texttt{NN}-based MPC, and maintain interpretability in the control logic.
\end{itemize}

\section{Preliminaries and Notation}
\subsection{Oblique Decision Tree with Linear Predictions}
Decision trees organize decisions and their outcomes into a tree-like structure. As shown in Fig. \ref{fig:dt}, the decision tree recursively partitions the dataset based on the splitting rule at each branch node (yellow nodes in the figure), making predictions at each leaf node (green nodes). In this paper, we focus on the Oblique Decision Tree with Linear Predictions, where the split rule at each branch node is a linear combination of features, and the prediction at each leaf node is computed by a linear model.

Formally, consider a dataset $(\mathbf{X}, \mathbf{Y})$ with $S$ samples, where each sample includes a feature vector $\mathbf{x}_i \in \mathbb{R}^n$ and a label vector $\mathbf{y}_i \in \mathbb{R}^m$.
An \texttt{ODT-LP} with depth $D$ comprises $T = 2^{(D+1)} - 1$ nodes. Branch nodes, indexed by $\mathcal{T}_B = \{1, \ldots, \lfloor T/2 \rfloor\}$, handle splits, while leaf nodes, indexed by $\mathcal{T}_L = \{\lfloor T/2 \rfloor + 1, \ldots, T\}$, provide predictions.

Each branch node $t \in \mathcal{T}_B$ is characterized by a split vector ${a}_t \in [0, 1]^p$ and a threshold $b_t \in [0, 1]$. For a sample $\mathbf{x}_i$, the split rule is: $ \mathbf{x}_i \text{ proceeds to the left child node if } {a}_t^\top \mathbf{x}_i \leq b_t, \text{ otherwise to the right child node.}$

Each leaf node $t \in \mathcal{T}_L$ is associated with a prediction model. When a sample $\mathbf{x}_i$ reaches $t$th leaf node following the split rules $(\mathbf{a}, \mathbf{b})$ in the branch nodes, the predicted value of this sample is computed as: $\hat{\mathbf{y}}_i = f_{\text{tree}}(\mathbf{a}, \mathbf{b}, \mathbf{c}, \mathbf{d}, \mathbf{x}_i) = {c}_t^\top \mathbf{x}_i + d_t$, where ${c}_t$ and $d_t$ are the parameters of $t$th leaf node's prediction model, learned to minimize the prediction error of samples assigned to this leaf node.

The optimization objective for training the tree minimizes the prediction error:
\vskip -0.2in
\begin{equation}
    \begin{aligned}
        & \min_{\mathbf{a}, \mathbf{b}, \mathbf{c}, \mathbf{d}} && \quad \sum_{i=1}^S \|\hat{\mathbf{y}}_i - \mathbf{y}_i\|_2^2,\\
        & \text{s.t. } && \quad  \hat{\mathbf{y}}_i = f_{\text{tree}}(\mathbf{a}, \mathbf{b}, \mathbf{c}, \mathbf{d}, \mathbf{x}_i).
    \end{aligned}
\end{equation}
\subsection{Linear Model Predictive Control}
Considering a discrete-time linear time-invariant system as follows:
\begin{equation} \label{eqn:sys}
    \begin{aligned}
        & x(k+1) = Ax(k) + Bu(k),\\
        & y(k) = Cx(k) + Du(k),
    \end{aligned}
\end{equation}
where $k \in \mathbb{I}_{\geq 0}$ is a nonnegative integer denoting the sample number, and time can be referred to as $t = k\Delta$, where $\Delta$ is the sample time. $x(k) \in \mathbb{R}^n$ is the state vector, $u(k) \in \mathbb{R}^m$ is the input vector, and $y(k) \in \mathbb{R}^p$ is the output vector. It is assumed that the pair $(A, B)$ is stabilizable and that a full measurement of the state $x(k)$ is available at the current sampling time $k$.

Model predictive control problem aims to regulate system to the origin by solving the following optimization problem:
\begin{equation} \label{eqn:mpc}
    \begin{aligned}
        & \min_{\mathbf{u}} && \sum_{k=0}^{N-1} (x(k)^\top Qx(k) + u(k)^\top Ru(k)) + x(N)^\top Px(N), \\
        & \text{s.t.} && x(k+1) = Ax(k) + Bu(k), \quad x(0) = x_0, \\
        & && x_{\text{min}} \leq x(k) \leq x_{\text{max}}, \quad k = 0, 1, \ldots, N, \\
        & && u_{\text{min}} \leq u(k) \leq u_{\text{max}}, \quad k = 0, 1, \ldots, N, 
    \end{aligned}
\end{equation}
where $N$ is the control horizon, and $Q$, $R$, and $P$ are the weight matrices for the states, inputs, and final states, respectively. It is assumed that $Q$ and $P$ are positive semidefinite, while $R$ is positive definite. The constraints $x_{\text{min}}$ and $x_{\text{max}}$ define the lower and upper bounds on the states, and $u_{\text{min}}$ and $u_{\text{max}}$ define the lower and upper bounds on the inputs.

The solution of the problem \eqref{eqn:mpc} yields the optimal control sequence $\mathbf{u}^*(x_0)= \{u^*(0; x_0), u^*(1; x_0), \ldots, u^*(N-1; x_0)\}$. At each sampling time, only the first control input $u^*(0; x_0)$ is applied to the system. Hence, the MPC control law is defined as $u_0 = \kappa_N(x_0)=u^*(0; x_0)$. At the next time step, the optimization problem \eqref{eqn:mpc} is resolved with updated state measurements. This receding horizon strategy ensures that the system is iteratively controlled while respecting constraints and minimizing the cost function.

\section{Exact Learning of MPC law using Oblique Decision Tree with Linear Predictions}
In this section, we first establish the equivalence between the Linear MPC control law defined by Problem (\ref{eqn:mpc}) and the \texttt{ODT-LP} model. Then, we present the training procedure and analyze the robustness of the proposed \texttt{ODT-LP}-based MPC controller.

\subsection{Equivalence of Linear MPC and Oblique Decision Trees}
The closed-form solution to Linear MPC problem \eqref{eqn:mpc} is a continuous, piece-wise affine function over a partitioned state space, formalized as follows:

\begin{corollary}(Adapted from \cite{bemporad_explicit_2002}, Corollary 2)\label{cor:empc}
The MPC control law $\kappa_N(x_0) = f(x_0), \ f:\mathbb{R}^n \to \mathbb{R}^m$, defined by the optimization problem \eqref{eqn:mpc}, is continuous and piecewise affine:
\begin{equation}
    u_0 = \kappa_N(x_0) = F^i x_0 + g^i, \ \text{if} \ H^i x_0 \leq l^i, \ i = 1, \ldots, N_{\text{mpc}},
\end{equation}
where the polyhedral sets $\{H^i x_0 \leq l^i\}, \ i = 1, \ldots, N_{\text{mpc}}$, form a partition of the given set of states.
\end{corollary}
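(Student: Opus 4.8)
The plan is to recast Problem \eqref{eqn:mpc} as a multiparametric quadratic program (mpQP) in which the initial state $x_0$ plays the role of the parameter, and then to invoke the standard sensitivity theory for strictly convex parametric QPs. First I would eliminate the state trajectory: using $x(k+1) = Ax(k) + Bu(k)$ with $x(0) = x_0$, each predicted state can be written as $x(k) = A^k x_0 + \sum_{j=0}^{k-1} A^{k-1-j} B u(j)$, so that the whole trajectory is an affine function of the stacked decision vector $\mathbf{u} = [u(0)^\top, \ldots, u(N-1)^\top]^\top$ and the parameter $x_0$. Substituting these expressions into the objective yields a quadratic cost $\tfrac{1}{2}\mathbf{u}^\top H \mathbf{u} + x_0^\top F \mathbf{u} + \tfrac{1}{2} x_0^\top Y x_0$, where positive definiteness of $R$ together with positive semidefiniteness of $Q$ and $P$ guarantees $H \succ 0$; substituting into the state and input bounds yields linear inequality constraints of the form $G\mathbf{u} \leq W + E x_0$.

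The resulting problem $\min_{\mathbf{u}} \tfrac{1}{2}\mathbf{u}^\top H \mathbf{u} + x_0^\top F \mathbf{u}$ subject to $G\mathbf{u} \leq W + E x_0$ is a strictly convex mpQP, and the next step is to exploit its Karush--Kuhn--Tucker conditions. For any fixed optimal active set $\mathcal{A}$, the active constraints $G_{\mathcal{A}}\mathbf{u} = W_{\mathcal{A}} + E_{\mathcal{A}} x_0$ together with stationarity $H\mathbf{u} + F^\top x_0 + G_{\mathcal{A}}^\top \lambda_{\mathcal{A}} = 0$ form a linear system whose solution---unique by $H \succ 0$ under a constraint qualification---gives $\mathbf{u}^*(x_0)$ and $\lambda_{\mathcal{A}}^*(x_0)$ as affine functions of $x_0$. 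The set of parameters for which this active set stays optimal, i.e. the critical region, is carved out by primal feasibility of the inactive constraints and dual feasibility $\lambda_{\mathcal{A}}^*(x_0) \geq 0$, both of which are affine inequalities in $x_0$; hence each critical region is a polyhedron $\{H^i x_0 \leq l^i\}$, and on it the solution is affine, $\mathbf{u}^*(x_0) = \bar{F}^i x_0 + \bar{g}^i$.

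Finally I would assemble the global picture. Enumerating the finitely many optimal active sets produces finitely many polyhedral critical regions that partition the feasible state set, establishing the piecewise-affine form with $N_{\text{mpc}}$ pieces. Continuity of $\mathbf{u}^*(\cdot)$ follows from strict convexity, which makes the minimizer single-valued and continuous in $x_0$, so the affine pieces from adjacent regions coincide on shared boundaries. Extracting the first block $u_0 = u^*(0; x_0)$, i.e. the first $m$ rows of $\bar{F}^i x_0 + \bar{g}^i$, preserves both continuity and the piecewise-affine structure and yields the claimed form $\kappa_N(x_0) = F^i x_0 + g^i$ on $\{H^i x_0 \leq l^i\}$.

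I expect the main obstacle to lie in the bookkeeping around degeneracy and boundary behavior: showing that the critical regions genuinely tile the feasible parameter set without gaps or spurious overlaps, and that continuity persists across boundaries where the active set changes, including weakly active constraints and primal or dual degeneracy. Handling these cases rigorously---rather than the routine linear algebra of solving the KKT system---is where the real care is needed, and it is precisely this analysis that is carried out in the referenced explicit-MPC literature, which is why I would cite it rather than reprove it in full.
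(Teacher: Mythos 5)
Your proposal is correct and follows essentially the same route as the source: the paper itself offers no proof of this corollary, deferring entirely to \cite{bemporad_explicit_2002}, and your sketch (state elimination to a strictly convex mpQP, KKT-based affine solutions on polyhedral critical regions, continuity from single-valuedness of the minimizer, extraction of the first input block) is precisely the argument carried out there. You also correctly flag degeneracy and the tiling of critical regions as the delicate points handled in that reference, so nothing further is needed.
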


Based on Corollary \ref{cor:empc}, we can state the following theorem: 

\begin{theorem}\label{the:dtmpc}
    The MPC control law defined by the optimization problem \eqref{eqn:mpc} is equivalent to a finite-depth oblique decision tree with linear predictions at the leaf nodes .
\end{theorem}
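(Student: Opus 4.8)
The plan is to build the tree directly from the polyhedral partition guaranteed by Corollary~\ref{cor:empc}. Since $\kappa_N$ is piecewise affine on the regions $\mathcal{R}_i = \{x_0 : H^i x_0 \leq l^i\}$, $i = 1,\dots,N_{\text{mpc}}$, it suffices to exhibit an \texttt{ODT-LP} whose leaf regions refine this partition and whose leaf models reproduce the affine pieces $F^i x_0 + g^i$. The guiding observation is that every region boundary is a hyperplane, and an oblique branch node is precisely a device for testing on which side of a hyperplane a point lies; a cascade of such tests isolates a single cell of the induced hyperplane arrangement, on which the control law coincides with a single affine map.

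First I would collect the hyperplanes. Let $\mathcal{H} = \{(w_j, v_j)\}_{j=1}^{M}$ be the set of all distinct rows appearing across the constraint matrices $H^i$ and right-hand sides $l^i$, so that each facet $\{w_j^\top x_0 = v_j\}$ of every region belongs to $\mathcal{H}$. Because $N_{\text{mpc}}$ is finite, each $H^i$ has finitely many rows and $M < \infty$. Each region $\mathcal{R}_i$ is an intersection of halfspaces drawn from $\mathcal{H}$; consequently the arrangement of all $M$ hyperplanes refines the MPC partition, and every full-dimensional cell of the arrangement lies inside exactly one region $\mathcal{R}_i$.

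Next I would assemble the tree. I take a complete binary tree of depth $M$ and, at every branch node $t \in \mathcal{T}_B$ on level $j$, install the oblique split that routes $x_0$ to the left child when $w_j^\top x_0 \leq v_j$ and to the right child otherwise, rescaling each inequality so the split parameters meet the $a_t \in [0,1]^p$, $b_t \in [0,1]$ normalization. Since the admissible states lie in the bounded box $[x_{\min}, x_{\max}]$, this is achieved by a standard affine change of coordinates together with feature augmentation to absorb negative coefficients. Every root-to-leaf path then fixes the sign of all $M$ affine forms, so the set of points reaching a given leaf is exactly one (possibly empty) cell of the arrangement, and each nonempty cell lies in a unique region $\mathcal{R}_i$. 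For that leaf $t \in \mathcal{T}_L$ I set the linear prediction $c_t^\top x_0 + d_t := F^i x_0 + g^i$; leaves associated with empty cells receive arbitrary models, as they are never reached. By construction the resulting depth-$M$ \texttt{ODT-LP} outputs $F^i x_0 + g^i$ exactly when $x_0 \in \mathcal{R}_i$, which equals $\kappa_N(x_0)$.

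I expect the main obstacle to be the bookkeeping that makes the cell-to-region correspondence rigorous—specifically, verifying that each nonempty leaf cell lies in a unique region, so the leaf model is well defined, and that no feasible state escapes the union of regions. This follows from the $\mathcal{R}_i$ forming a partition whose boundaries are drawn from $\mathcal{H}$, but it must be stated carefully, including the $\leq$ convention on shared lower-dimensional boundaries. A secondary technical point is realizing every hyperplane test within the $[0,1]$-box parametrization of the split parameters; this is not conceptually deep, but one should confirm the normalization/augmentation argument preserves a finite depth bound.
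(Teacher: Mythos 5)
Your proposal is correct, and it follows the same basic idea as the paper's proof: use Corollary~\ref{cor:empc}, encode the region-boundary hyperplanes as oblique splits, and store the affine laws $F^i x_0 + g^i$ at the leaves. The difference is in how the tree is assembled, and your version is the more rigorous of the two. The paper recursively partitions the state space "until terminal regions are reached," so that each root-to-leaf path traverses exactly the boundaries of one region $\mathcal{R}_i$; read literally, this glosses over the fact that a hyperplane bounding one region generally slices through other regions, so the recursion does not obviously terminate with leaves that coincide with the $\mathcal{R}_i$ themselves. Your construction resolves this by using the full arrangement of all $M$ distinct facet hyperplanes: a complete depth-$M$ tree whose leaves are the (possibly empty) cells of the arrangement, each of which is guaranteed to lie inside a single region because the arrangement refines the partition. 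What this buys is a well-defined leaf-to-region map; what it costs is a tree of depth $M$ with $2^M$ leaves, typically far larger than the region-by-region tree the paper has in mind — but the theorem only asks for finite depth, so this is harmless. Two small points to tighten: first, on shared lower-dimensional boundaries the leaf model is well defined regardless of the $\leq$ tie-breaking convention, because Corollary~\ref{cor:empc} gives continuity of $\kappa_N$, so adjacent affine pieces agree there; you flag the issue but should invoke continuity to close it. Second, your normalization step ($a_t \in [0,1]^p$, $b_t \in [0,1]$) via "feature augmentation" is the weakest link — negating or duplicating features changes the feature space the tree operates on, so you should either argue the splits can be rescaled within the original coordinates or note, as the paper's own training section does, that oblique splits with $a_t \in \mathbb{R}^n$, $b_t \in \mathbb{R}$ are the operative definition (the paper is internally inconsistent on this and its proof ignores the issue entirely).
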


\begin{proof}
    From Corollary \ref{cor:empc}, the MPC control law $\kappa_N(x_0)$ is continuous and piecewise affine, with polyhedral regions $\{H^i x_0 \leq l^i\}$ and corresponding affine laws $\kappa_N(x_0) = F^i x_0 + g^i$. We construct an equivalent \texttt{ODT-LP} as follows: 

    \textit{Branch nodes:} Each branch node's split rule $a_t^\top x_0 < b_t$ encodes one row from $\{H^i x_0 \leq l^i\}$, which define one boundary of the polyhedral region. These split rules recursively partition the state space until terminal regions are reached. Since the number of polyhedral regions is finite, denoted as $N_{\text{mpc}}$, and each region has a finite number of boundaries, the total number of branch nodes is also finite. Thus, the resulting decision tree has finite depth.

    \textit{Leaf nodes:} Given the construction of branch nodes, the path to each leaf node corresponds to the boundaries of a single polyhedral region $\{H^i x_0 \leq l^i\}$. Consequently, each leaf node stores the associated affine control law $F^i x_0 + g^i$.
    
    Combining these elements completes the construction of the equivalent \texttt{ODT-LP} model.
\end{proof}

Notably, a single Linear MPC control law can correspond to multiple equivalent \texttt{ODT-LP} models. Furthermore, if we attempt to construct \texttt{ODT-LP} models by first computing the explicit form of the control law, the approach will be constrained by the same scalability problem of explicit MPC as mentioned in the introduction. Additionally, this method can lead to an unbalanced decision tree, resulting in disproportionate online computational time—for example, if all boundaries of a region are stored in right branch nodes, the decision path may become excessively deep. To address these challenges, we propose a data-driven framework that directly learns the MPC control law from sampled state-action pairs, bypassing explicit form computation. This approach yields an exact representation of the linear MPC control law for simpler problems and a high-performance approximation for complex controllers with a large number of regions.

\subsection{Procedure of the \texttt{ODT-LP}-based MPC} \label{sec:training}
\begin{figure}[!htb]
    \centering
    \includegraphics[width=\columnwidth]{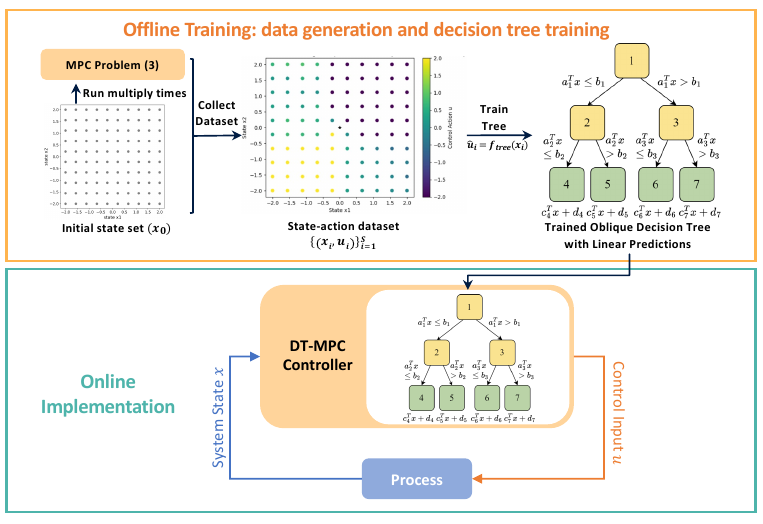}
    \caption{Procedure of the \texttt{ODT-LP}-based MPC}
    \label{fig:framework}
\end{figure}
As shown in Fig. \ref{fig:framework}, the training procedure of the \texttt{ODT-LP}-based MPC controller consists of two phases including offline training and online implementation. In the offline training phase, the dataset is generated by solving the MPC problem \eqref{eqn:mpc} across diverse initial states. The state-action dataset is then used to train the \texttt{ODT-LP} model in a supervised manner, where oblique splits are optimized to partition the state space into regions mirroring the MPC’s explicit solution, while leaf nodes encode precomputed affine control laws. The online implementation phase involves evaluating the trained \texttt{ODT-LP} model to compute the control action at each time step. 

\subsubsection{Data generation}
The data generation phase constructs a representative dataset for training the \texttt{ODT-LP} model. First, we use uniform grid sampling with a grid size $\delta_X$ to generate an initial set of states covering the system's operational envelope. Then, the MPC optimization problem (\ref{eqn:mpc}) is solved offline multiple times, starting from the sampled initial states $x_0$, to compute the corresponding optimal control actions $u_0 = \kappa_N(x_0)$. The resulting state-action pairs $\{(x_i, u_i)\}_{i=1}^S$ are stored in the dataset.

\subsubsection{Decision Tree Training} \label{sec:training_alogirthm}
The \texttt{ODT-LP} control law is trained using the generated dataset $\{(x_i, u_i)\}_{i=1}^S$. The control law is formulated as $\hat{u}_i = \kappa_{\texttt{ODT-LP}}(x_i) = f_{\text{tree}}(\mathbb{\theta}, x_i)$, where $\mathbb{\theta} = \{\mathbf{a}, \mathbf{b}, \mathbf{c}, \mathbf{d}\}$ comprises the split parameters ${a}_t \in \mathbb{R}^n$ and thresholds $b_t \in \mathbb{R}$ at branch nodes, as well as the affine coefficients ${c}_t \in \mathbb{R}^{n \times m}$ and biases $d_t \in \mathbb{R}^m$ at leaf nodes. The training objective minimizes the squared error over the training dataset $\mathcal{L} = \min_{\mathbb{\theta}} \sum_{i=1}^S \left\| \hat{u}_i - u_i \right\|_2^2.$

To enable gradient-based optimization, the problem is reformulated into a unified unconstrained form:
\begin{equation}\label{eq:training}
    \min_{\mathbb{\theta}} \sum_{i=1}^S \sum_{t \in \mathcal{T}_{L}} \left[ \prod_{j \in \mathcal{A}_t^l} I_{i,j} \prod_{j \in \mathcal{A}_t^r} \left(1 - I_{i,j}\right) \left\| u_i - ({c}_t^\top x_i + d_t) \right\|_2^2 \right],
\end{equation}
where $\mathcal{T}_{L}$ denotes the set of leaf nodes, $\mathcal{A}_t^l$ and $\mathcal{A}_t^r$ represent the subsets of branch nodes along the unique path to leaf $t$ where the left and right split conditions ${a}_j^\top x_i \leq b_j$ and ${a}_j^\top x_i > b_j$ are satisfied, respectively. The indicator $I_{i,j}=\mathds{1}(b_j-a_j^Tx_i \geq 0)$ equals $1$ if $x_i$ is split to the left at branch node $j$; otherwise, it equals $0$. The product terms $\prod_{j \in \mathcal{A}_t^l} I_{i,j}$ and $\prod_{j \in \mathcal{A}_t^r} (1 - I_{i,j})$ act as a binary selector: for each sample $x_i$, exactly one leaf node $t$ satisfies $\prod_{j \in \mathcal{A}_t^l} I_{i,j} \prod_{j \in \mathcal{A}_t^r} (1 - I_{i,j}) = 1$, ensuring only the corresponding affine model $({c}_t^\top x_i + d_t)$ contributes to the loss. To resolve the discontinuity of $I_{i,j}$, a scaled sigmoid function $\mathcal{S}(z) = \frac{1}{1 + e^{-\alpha z}}$ approximates the indicator as $I_{i,j} \approx \mathcal{S}(b_j - {a}_j^\top x_i)$, where $\alpha > 0$ controls the approximation sharpness. This smooth relaxation enables gradient propagation through the tree structure during training. Parameters $\mathbb{\theta}$ are updated via gradient descent to minimize the differentiable loss while preserving the interpretable hierarchy of splits and predictions. More details about the gradient-based optimization algorithm can be found in \cite{mao2024can}.

 \subsection{Robustness of \texttt{ODT-LP} based MPC}
\subsubsection{Error Analysis of the \texttt{ODT-LP} Policy}
We begin by stating an assumption on the learning accuracy of the \texttt{ODT-LP} model for the uniformly sampled dataset.

\begin{assumption}[Learning Accuracy]\label{ass:learning}
    Given a training set \(\{(x_i, u_i)\}_{i=1}^S\) covering the state space \(\mathcal{X}\), the \texttt{ODT-LP} learns a control policy \(\kappa_{\texttt{ODT-LP}}(x_i)\) that approximates the MPC control law \(\kappa_N(x_i)\) with bounded policy approximation error:
    \begin{equation}\label{eq:learning_error}
        |\kappa_{\texttt{ODT-LP}}(x_i) - \kappa_N(x_i)| \leq \delta_{DT} \quad \forall i \in \{1,\ldots,S\},
    \end{equation}
    where \(\delta_{DT} \geq 0\) denotes the maximum learning error. 
\end{assumption}

It is well-known that continuous and piecewise affine (CPWA) functions with finite pieces are Lipschitz continuous \cite{goujonStable2023}. By Corollary \ref{cor:empc}, the linear MPC control law \(\kappa_N(x)\) is a CPWA function over a finite polyhedral partition of the state space $\mathcal{X}$, thus Lipschitz continuous.
However, the trained \texttt{ODT-LP} policy could be discontinuous and include jumps across the boundaries between two adjacent leaf regions. Since the explicit form of the \texttt{ODT-LP} is known, we can characterize two key properties: its maximum local Lipschitz constant, $K_{DT} = \max_{t\in \mathcal{T}_L} ||\mathbf{c}_t||_2$, where $\mathbf{c}_t$ is the weight matrix in leaf node $t$; and its maximum discontinuity jump, $J_{\max} = \sup_{\mathbf{x} \in \mathcal{B}} |\kappa_{t_1}(\mathbf{x}) - \kappa_{t_2}(\mathbf{x})|$, where $\mathcal{B}$ is the set of boundaries between any two adjacent leaf nodes $t_1$ and $t_2$. Combining these properties with Assumption \ref{ass:learning}, we have:

\begin{lemma}[Policy Approximation Error Bound]\label{lem:control_error}
    Let \(K_{\text{max}}\) be the Lipschitz constant of the linear MPC control law \(\kappa_N(\mathbf{x})\) over \(\mathcal{X}\). The approximation error, denoted \(e_{DT}(\mathbf{x})\), between the \texttt{ODT-LP} policy and the ideal MPC law satisfies:
    \begin{equation}\label{eq:total_error}
        \begin{aligned}
            & | e_{DT}(\mathbf{x}) | = |\kappa_{\texttt{ODT-LP}}(\mathbf{x}) - \kappa_N(\mathbf{x})| \\
            & \quad \leq \delta_{DT} + J_{\max} + (K_{DT} + K_{\text{max}})\frac{\sqrt{n}\delta_X}{2}, \forall \mathbf{x} \in \mathcal{X},
        \end{aligned}
    \end{equation}
    where \(\delta_X\) is the grid size from the uniform grid sampling and $n$ is the dimension of the system state.
\end{lemma}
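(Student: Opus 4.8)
The plan is to bound the pointwise error at an arbitrary state by comparing both policies to their values at the nearest grid sample, invoking Assumption \ref{ass:learning} only at sampled points and then propagating the bound to off-grid states through the regularity of each policy. First I would fix $\mathbf{x} \in \mathcal{X}$ and let $\mathbf{x}_i$ be the nearest sampled state. Since the samples come from a uniform grid of spacing $\delta_X$, every state lies in a hypercube cell of side $\delta_X$, so its distance to the nearest grid vertex is at most the half-diagonal: $\|\mathbf{x} - \mathbf{x}_i\|_2 \leq \tfrac{\sqrt{n}\delta_X}{2}$. This elementary geometric estimate is exactly what produces the $\tfrac{\sqrt{n}\delta_X}{2}$ factor in the stated bound.

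Next I would insert $\mathbf{x}_i$ and split the error by the triangle inequality into three terms:
\begin{equation*}
|e_{DT}(\mathbf{x})| \leq \underbrace{|\kappa_{\texttt{ODT-LP}}(\mathbf{x}) - \kappa_{\texttt{ODT-LP}}(\mathbf{x}_i)|}_{\text{(I)}} + \underbrace{|\kappa_{\texttt{ODT-LP}}(\mathbf{x}_i) - \kappa_N(\mathbf{x}_i)|}_{\text{(II)}} + \underbrace{|\kappa_N(\mathbf{x}_i) - \kappa_N(\mathbf{x})|}_{\text{(III)}}.
\end{equation*}
Term (II) is immediately bounded by $\delta_{DT}$ via Assumption \ref{ass:learning}, since $\mathbf{x}_i$ is a sample. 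Term (III) uses the global Lipschitz continuity of the CPWA MPC law guaranteed by Corollary \ref{cor:empc}, giving (III) $\leq K_{\max}\|\mathbf{x} - \mathbf{x}_i\|_2 \leq K_{\max}\tfrac{\sqrt{n}\delta_X}{2}$.

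The hard part will be Term (I), the variation of the \texttt{ODT-LP} policy between $\mathbf{x}$ and $\mathbf{x}_i$, since this policy is only piecewise affine and may jump across leaf boundaries. If both points lie in the same leaf $t$, then (I) is the variation of a single affine map, bounded by $\|\mathbf{c}_t\|_2\|\mathbf{x}-\mathbf{x}_i\|_2 \leq K_{DT}\tfrac{\sqrt{n}\delta_X}{2}$. When they lie in different leaves, I would traverse the segment from $\mathbf{x}_i$ to $\mathbf{x}$, insert the crossing point $\bar{\mathbf{x}} \in \mathcal{B}$ on the shared boundary, and split (I) into the affine variation on each side plus the discontinuity jump at $\bar{\mathbf{x}}$. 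The two affine contributions sum to at most $K_{DT}(\|\mathbf{x}-\bar{\mathbf{x}}\|_2 + \|\bar{\mathbf{x}}-\mathbf{x}_i\|_2) = K_{DT}\|\mathbf{x}-\mathbf{x}_i\|_2$ because $\bar{\mathbf{x}}$ lies on the segment, while the jump is at most $J_{\max}$ by its definition. Hence (I) $\leq J_{\max} + K_{DT}\tfrac{\sqrt{n}\delta_X}{2}$, and summing (I)--(III) yields the claimed inequality.

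A subtlety I would flag is that the decomposition of Term (I) implicitly assumes the segment crosses at most one leaf boundary, so that a single $J_{\max}$ suffices; this holds for grids fine enough relative to the leaf partition, which is the regime targeted by the paper's error-decomposition interpretation. I would state this assumption explicitly rather than hide it, since with multiple crossings the jump contribution could in principle accumulate into a multiple of $J_{\max}$.
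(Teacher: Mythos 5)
Your proposal is correct and follows essentially the same route as the paper's proof: the same nearest-grid-point insertion, the same three-term triangle-inequality decomposition, and the same bounds ($\delta_{DT}$ from Assumption~\ref{ass:learning} at the sample, $K_{\max}$-Lipschitz continuity for the MPC term, and $K_{DT}\|\mathbf{x}-\mathbf{x}_i\|_2 + J_{\max}$ for the tree-policy variation). In fact your treatment of Term (I) is more careful than the paper's, which simply asserts that bound ``by considering both the local affine variation and potential jumps between regions''; your explicit segment-crossing argument, and your flag that a single $J_{\max}$ implicitly assumes at most one leaf-boundary crossing between $\mathbf{x}$ and its nearest sample (otherwise jumps accumulate), exposes an assumption the paper makes silently.
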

\begin{proof}
    For any point \(\mathbf{x} \in \mathcal{X}\), let its nearest sampled point in the training set be \(\mathbf{x}_{i, \text{near}}\). The maximum distance is bounded by \(|\mathbf{x} - \mathbf{x}_{i, \text{near}}| \leq \frac{\sqrt{n}\delta_X}{2}\). We decompose the total error using the triangle inequality:
    \begin{equation*}
        \begin{aligned}
            & |e_{DT}(\mathbf{x})| \leq |\kappa_{\texttt{ODT-LP}}(\mathbf{x}) - \kappa_{\texttt{ODT-LP}}(\mathbf{x}_{i, \text{near}})| \\
            & \quad + |\kappa_{\texttt{ODT-LP}}(\mathbf{x}_{i, \text{near}}) - \kappa_N(\mathbf{x}_{i, \text{near}})| + |\kappa_N(\mathbf{x}_{i, \text{near}}) - \kappa_N(\mathbf{x})|.
        \end{aligned}
    \end{equation*}
    We bound each of the three terms on the right-hand side. The first term accounts for the variation of the learned \texttt{ODT-LP} policy between a training point and an arbitrary nearby point; it is bounded by considering both the local affine variation and potential jumps between regions, yielding $|\kappa_{\texttt{ODT-LP}}(\mathbf{x}) - \kappa_{\texttt{ODT-LP}}(\mathbf{x}_{i, \text{near}})| \leq K_{DT}|\mathbf{x} - \mathbf{x}_{i, \text{near}}| + J_{\max}$. The second term is the learning error at a training point, which is bounded by Corollary \ref{ass:learning} as $|\kappa_{\texttt{ODT-LP}}(\mathbf{x}_{i, \text{near}}) - \kappa_N(\mathbf{x}_{i, \text{near}})| \leq \delta_{DT}$. Finally, the third term is the sampling error, bounded by the Lipschitz continuity of the MPC policy: $|\kappa_N(\mathbf{x}_{i, \text{near}}) - \kappa_N(\mathbf{x})| \leq K_{\text{max}}|\mathbf{x} - \mathbf{x}_{i, \text{near}}|$. Combining these bounds and substituting $|\mathbf{x} - \mathbf{x}_{i, \text{near}}| \leq \frac{\sqrt{n}\delta_X}{2}$ yields the final result.
\end{proof}

\subsubsection{Input-to-State Stability Analysis} 
Our analysis demonstrates that the control policy approximated by the \texttt{ODT-LP} model exhibits bounded approximation errors. Building on prior work by \cite{hertneckLearning2018, kumar_industrial_2021}, which establishes robustness guarantees for neural network (\texttt{NN}) controllers trained using nominal or robust MPC control laws, we treat the \texttt{ODT-LP} approximation error as an additive disturbance to the nominal MPC controller. By leveraging input-to-state stability (ISS) theory from \cite{sontagChar1995}, we extend these robustness results to \texttt{ODT-LP}-based controllers through the derived error bound.

Assuming nominal MPC stability holds, the optimal cost function $ V_N^0(x) $ satisfies $ c_1|x|^2 \leq V_N^0(x) \leq c_2|x|^2 $ and  $V_N^0(x^+) - V_N^0(x) \leq -c_1|x|^2 $ for constants $ c_1, c_2 > 0 $ and all $ x \in \mathcal{X}_N $, where $ \mathcal{X}_N:= \text{lev}_{Nl + \tau} \, V_N^0({x})$ is the region of attraction. Here, $N$ is the control horizon, $\tau>0$ is the level set parameter defining the terminal set $\mathbb{X}_f := \text{lev}_{\tau}\, x^\top P x$, and $l$ is a lower bound on the stage cost outside the terminal set, satisfying $ 0< l \leq \ell(x,u) = x^\top Q x + u^\top R u, \text{ for all } x\in\mathbb{R}^n\setminus \mathbb{X}_f$.

To establish closed-loop robustness under \texttt{ODT-LP} control, we adapt the framework from \cite{kumar_industrial_2021} using fundamental results from \cite{rawlingsModel2017}:

\begin{definition}[Robust Positive Invariance]
    A set $\mathcal{X} \subset \mathbb{R}^n$ is robustly positively invariant for $x^+ = f(x,w)$ if $x \in \mathcal{X},\ w \in \mathbb{W} \text{ implies } f(x,w) \in \mathcal{X}$.
\end{definition}

\begin{definition}[Input-to-State Stability (ISS)]
    The system $x^+ = f(x,w)$ is ISS in $\mathcal{X}$ if exists $\beta \in \mathcal{KL}$, $\sigma \in \mathcal{K}$ such that $|\phi(k;x,{w})| \leq \beta(|x|,k) + \sigma(|{w}|), \forall x \in \mathcal{X}, {w} \in \mathbb{W}$,
    where \(\phi(k;x,{w})\) denotes the solution at time \(k\).
\end{definition}

\begin{definition}[ISS-Lyapunov Function] \label{def:iss_lyap}
    $V: \mathcal{X} \to \mathbb{R}_{\geq 0}$ is an ISS-Lyapunov function for \(x^+ = f(x,w)\) if exists $\alpha_1,\alpha_2,\alpha_3 \in \mathcal{K}_\infty$, $\sigma \in \mathcal{K}$ that $\alpha_1(|x|) \leq V(x) \leq \alpha_2(|x|)$ and $V(f(x,w)) - V(x) \leq -\alpha_3(|x|) + \sigma(|w|)$.
\end{definition}

\begin{proposition}[Bounded Function Differences]\label{prop:bounded}
    Let \(\mathcal{C} \subset \mathcal{D} \subset \mathbb{R}^n\), where \(\mathcal{C}\) is compact and \(\mathcal{D}\) is closed. For any continuous function \(f: \mathcal{D} \to \mathbb{R}^n\), exists $\sigma \in \mathcal{K}_\infty$ such that:
    \begin{equation*}
        |f(x) - f(y)| \leq \sigma(|x - y|), \quad \forall x \in \mathcal{C},\ y \in \mathcal{D}.
    \end{equation*}
\end{proposition}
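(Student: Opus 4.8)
The plan is to build the required comparison function directly from the modulus of continuity of $f$, exploiting the compactness of $\mathcal{C}$ to confine the relevant pairs $(x,y)$ to a bounded region. First I would define, for each $s \geq 0$, the one-sided modulus
\[
\omega(s) := \sup\bigl\{\, |f(x) - f(y)| : x \in \mathcal{C},\ y \in \mathcal{D},\ |x - y| \leq s \,\bigr\}.
\]
The initial task is to verify that $\omega(s)$ is finite. Since $x$ ranges over the compact set $\mathcal{C}$ and $|x - y| \leq s$, every admissible $y$ lies in the bounded set $\mathcal{C} + s\bar{B}$ (with $\bar{B}$ the closed unit ball); intersecting with the closed set $\mathcal{D}$ keeps this compact, so $|f(x)-f(y)|$ is a continuous function on a compact set of pairs and $\omega(s) < \infty$. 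By construction $\omega$ is nondecreasing and $\omega(0) = 0$.

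The second step is to show $\omega(s) \to 0$ as $s \to 0^+$. Fixing any $\epsilon_0 > 0$ and restricting to the compact set $\mathcal{D} \cap (\mathcal{C} + \epsilon_0\bar{B})$, on which $f$ is uniformly continuous, uniform continuity forces $\omega(s) \to 0$, giving right-continuity of $\omega$ at the origin. This is precisely where the compactness of $\mathcal{C}$ and closedness of $\mathcal{D}$ are essential: without them the admissible $y$'s could escape to infinity and uniform continuity would fail.

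The third and main step is to dominate the merely nondecreasing $\omega$ by an honest $\mathcal{K}_\infty$ function. I would set
\[
\sigma(s) := s + \frac{1}{s}\int_s^{2s} \omega(\tau)\,d\tau \quad (s > 0), \qquad \sigma(0) := 0.
\]
Rewriting the integral as $\int_1^2 \omega(su)\,du$ shows $\sigma$ is a nondecreasing term plus the strictly increasing term $s$, hence strictly increasing; the averaging renders it continuous on $(0,\infty)$, the limit at $0$ is $0$ by Step 2, and the $+s$ term forces $\sigma(s)\to\infty$, so $\sigma \in \mathcal{K}_\infty$. Because $\omega$ is nondecreasing, $\int_s^{2s}\omega(\tau)\,d\tau \geq s\,\omega(s)$, whence $\sigma(s) \geq \omega(s)$. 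Finally, for any $x \in \mathcal{C}$ and $y \in \mathcal{D}$, taking $s = |x-y|$ yields $|f(x)-f(y)| \leq \omega(|x-y|) \leq \sigma(|x-y|)$, which is the claim.

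I expect the construction of $\sigma$ in Step 3 to be the principal obstacle: one must simultaneously secure strict monotonicity, continuity, the correct behaviour at $0$ and at $\infty$, and the domination $\sigma \geq \omega$, and checking these for the averaged integral is the only genuinely non-routine part. Step 2 is a secondary subtlety, since the possible unboundedness of $\mathcal{D}$ must be neutralized by restricting to a compact neighbourhood of $\mathcal{C}$ before invoking uniform continuity.
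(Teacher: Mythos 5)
Your proof is correct, but note that the paper itself offers no proof of this proposition: it is stated as an imported ``fundamental result'' from the MPC literature (the Rawlings--Mayne--Diehl reference \cite{rawlingsModel2017}, where it appears as a standard appendix proposition), so there is no in-paper argument to compare against. Your construction is essentially the standard textbook one, reconstructed correctly and self-containedly. The three ingredients all check out: (i) finiteness of the one-sided modulus $\omega(s)$ holds because the admissible $y$'s lie in $\mathcal{D} \cap (\mathcal{C} + s\bar{B})$, which is compact (closed intersected with compact), and the pair set is a closed subset of a compact product; (ii) $\omega(s) \to 0$ as $s \to 0^+$ follows from uniform continuity of $f$ on the compact set $\mathcal{D} \cap (\mathcal{C} + \epsilon_0 \bar{B})$, and this is exactly where compactness of $\mathcal{C}$ and closedness of $\mathcal{D}$ are needed; (iii) the averaged integral $\sigma(s) = s + \frac{1}{s}\int_s^{2s}\omega(\tau)\,d\tau$ is well defined ($\omega$ is nondecreasing, hence Borel measurable and locally bounded), strictly increasing because of the additive $s$ term, continuous on $(0,\infty)$ since it equals $s + (F(2s)-F(s))/s$ with $F$ locally Lipschitz, vanishes at $0$ by (ii) via $\sigma(s) \le s + \omega(2s)$, is unbounded via $\sigma(s) \ge s$, and dominates $\omega$ by monotonicity of the integrand; the degenerate case $|x-y| = 0$ is covered because $\mathcal{C} \subset \mathcal{D}$ forces $x = y$ and $\omega(0) = 0$. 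The one thing your write-up buys that the paper does not provide is precisely this verification; conversely, the paper's citation route is the pragmatic choice in a conference setting, since the result is classical and the paper only uses it as a black box in Step~1 of the proof of Theorem~\ref{thm:odt-iss}.
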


\begin{proposition}[Subadditivity of $\mathcal{K}$-Functions]\label{prop:subadd}
    \begin{equation*}
        \sigma\left(\sum_{i=1}^n a_i\right) \leq \sum_{i=1}^n \sigma(na_i), \text{for } \sigma \in \mathcal{K} \text{ and }a_i \geq 0.
    \end{equation*}
\end{proposition}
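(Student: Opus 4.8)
The plan is to rely on the single property of a class-$\mathcal{K}$ function that the inequality actually requires: monotonicity. Recall $\sigma \in \mathcal{K}$ means $\sigma$ is continuous, strictly increasing, with $\sigma(0)=0$; only the ``increasing'' part enters the estimate, while continuity and the normalization are inessential here. The argument collapses the sum inside $\sigma$ to a single dominant term and then re-expands it as a sum, so the proof is a two-step chain of inequalities.

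First I would dominate the argument of the left-hand side by its largest summand. Writing $a_{\max} = \max_{1 \le i \le n} a_i$ and using $a_i \ge 0$, I have $\sum_{i=1}^n a_i \le n\,a_{\max}$. Because $\sigma$ is increasing, this propagates through $\sigma$ to give $\sigma\bigl(\sum_{i=1}^n a_i\bigr) \le \sigma(n\,a_{\max})$.

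Second, I would re-expand. Letting $k$ be any index with $a_k = a_{\max}$, the right-hand side of the previous step equals $\sigma(n\,a_k)$, which is one of the nonnegative terms appearing in $\sum_{i=1}^n \sigma(n\,a_i)$; discarding the remaining nonnegative terms can only shrink that sum, so $\sigma(n\,a_k) \le \sum_{i=1}^n \sigma(n\,a_i)$. Chaining the two inequalities yields the claim.

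I do not expect a genuine obstacle: this is a standard comparison estimate, and the only mildly delicate point is the handling of the maximizer. If several indices tie for the maximum, any one may serve as $k$, and the final step is unaffected because it relies solely on nonnegativity of the discarded terms. The degenerate case $a_1 = \cdots = a_n = 0$ is also immediate, since both sides reduce to $\sigma(0)=0$.
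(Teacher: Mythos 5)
Your proof is correct, and it is the standard argument for this fact: dominate $\sum_{i=1}^n a_i$ by $n\,a_{\max}$, push this bound through the monotone $\sigma$, and then recover the right-hand side by discarding nonnegative terms. Note that the paper itself states Proposition~\ref{prop:subadd} without any proof (it is a standard lemma in the ISS/MPC literature, adapted from the references the paper builds on), so there is no authorial proof to compare against; your argument supplies exactly what was left implicit, and in the usual way. One correction to your preamble, though: you claim that only monotonicity matters and that the normalization $\sigma(0)=0$ is inessential, but your final step discards the terms $\sigma(n a_i)$ for $i \neq k$, and their nonnegativity is precisely what the normalization (together with monotonicity, or equivalently the codomain $[0,\infty)$ in the definition of class $\mathcal{K}$) provides. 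For a merely increasing function with $\sigma(0)<0$ the inequality can fail: take $\sigma(t) = t-5$, $n=2$, $a_1 = 1$, $a_2 = 0$; then $\sigma(a_1 + a_2) = -4$ while $\sigma(2a_1) + \sigma(2a_2) = -3 - 5 = -8$, so the claimed inequality is violated. Your proof as executed is sound because you do invoke nonnegativity of the discarded terms at the point where it is needed; only the opening claim about what is inessential should be dropped.
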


We now analyze the robustness of the perturbed closed-loop system under \texttt{ODT-LP} control:
\begin{equation}
    \begin{aligned}
        {x}^+ = A{x} + B\kappa_{\texttt{ODT-LP}}(\hat{x}) + w, \ \hat{x}= x + e,        
    \end{aligned}
\end{equation}
where $w$ is process disturbance and $e$ is state estimation error. 

\begin{theorem}[ISS of \texttt{ODT-LP} based MPC]\label{thm:odt-iss}
     For all \(0 < \rho \leq N l + \tau\), there exist constants \(\delta_1\), \(\delta_2\), \(\delta_3 > 0\), functions \(\beta(\cdot) \in \mathcal{KL}\), and \(\alpha_e(\cdot)\), \(\alpha_w(\cdot)\), \(\alpha_{DT}(\cdot) \in \mathcal{K}\), such that for all disturbance sequences satisfying \(|{e}_{k+1}| \leq \delta_1\), \(|{w}_k| \leq \delta_2\), and \(|e_{DT}(\hat{x})| \leq \delta_3\), and for all \(\hat{x}\) in the set \(\mathcal{S} := \text{lev}_\rho \, V_N^0(\hat{x})\), the closed-loop system satisfies $
    |\phi_d(k; \hat{x})| \leq \beta(|\hat{x}|, k) + \alpha_e(|{e}_{k+1}|) + \alpha_w(|{w}_k|) + \alpha_{DT}({e}_{DT, max})$, 
    where \({e}_{DT, max} = \max_{\hat{x} \in \mathcal{S}} |\kappa_{\texttt{ODT-LP}}(\hat{x}) - \kappa_N(\hat{x}) | \leq \delta_{DT} + J_{\max} + (K_{DT} + K_{\text{max}})\frac{\sqrt{n}\delta_X}{2}\).
\end{theorem}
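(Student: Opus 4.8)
The plan is to use the nominal optimal MPC cost $V_N^0$ as a candidate ISS-Lyapunov function (Definition \ref{def:iss_lyap}) for the perturbed closed-loop system, treating the \texttt{ODT-LP} approximation error, the state-estimation error $e$, and the process disturbance $w$ jointly as an additive perturbation acting on the nominal controller. The two-sided bound $\alpha_1(|x|) \le V_N^0(x) \le \alpha_2(|x|)$ is inherited directly from the assumed nominal bounds $c_1|x|^2 \le V_N^0(x) \le c_2|x|^2$ by setting $\alpha_1(s)=c_1 s^2$ and $\alpha_2(s)=c_2 s^2 \in \mathcal{K}_\infty$; the substance of the argument is the descent inequality.

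First I would rewrite the perturbed successor $x^+ = Ax + B\kappa_{\texttt{ODT-LP}}(\hat{x}) + w$ relative to the nominal successor $\tilde{x}^+ := Ax + B\kappa_N(x)$, obtaining $x^+ = \tilde{x}^+ + B[\kappa_{\texttt{ODT-LP}}(\hat{x}) - \kappa_N(x)] + w$. I would then split the control mismatch as $\kappa_{\texttt{ODT-LP}}(\hat{x}) - \kappa_N(x) = e_{DT}(\hat{x}) + [\kappa_N(\hat{x}) - \kappa_N(x)]$, bounding the first term by $e_{DT,max}$ via Lemma \ref{lem:control_error} and the second by $K_{\text{max}}|e|$ using the Lipschitz continuity of $\kappa_N$. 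This gives $|x^+ - \tilde{x}^+| \le |B|\,e_{DT,max} + |B| K_{\text{max}}|e| + |w|$, so the total perturbation is a sum of three contributions governed separately by $e$, $w$, and $e_{DT,max}$.

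Next I would decompose the Lyapunov increment as $V_N^0(x^+) - V_N^0(x) = [V_N^0(x^+) - V_N^0(\tilde{x}^+)] + [V_N^0(\tilde{x}^+) - V_N^0(x)]$. The second bracket is bounded by the nominal descent $-c_1|x|^2$, yielding $\alpha_3(|x|)=c_1|x|^2 \in \mathcal{K}_\infty$. For the first bracket, since $V_N^0$ is continuous on the compact level set, Proposition \ref{prop:bounded} furnishes a $\sigma_V \in \mathcal{K}_\infty$ with $|V_N^0(x^+)-V_N^0(\tilde{x}^+)| \le \sigma_V(|x^+ - \tilde{x}^+|)$, and Proposition \ref{prop:subadd} (applied with three summands) separates this into $\sigma_V(3|B| e_{DT,max}) + \sigma_V(3|B| K_{\text{max}}|e|) + \sigma_V(3|w|)$. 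This establishes $V_N^0(x^+) - V_N^0(x) \le -\alpha_3(|x|) + \alpha_{DT}(e_{DT,max}) + \alpha_e(|e|) + \alpha_w(|w|)$, with the three $\mathcal{K}$-functions read off from the respective terms, so $V_N^0$ is an ISS-Lyapunov function; invoking the standard ISS-Lyapunov-implies-ISS result then yields the stated trajectory bound, while the final estimate on $e_{DT,max}$ is exactly Lemma \ref{lem:control_error}.

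The main obstacle is establishing robust positive invariance of the sublevel set $\mathcal{S} = \text{lev}_\rho V_N^0$, so that the one-step descent argument can be propagated along the entire trajectory. I would choose $\delta_1,\delta_2,\delta_3$ small enough that for every $\hat{x} \in \mathcal{S}$ the perturbation terms cannot raise $V_N^0$ past the level $\rho$ that the nominal decrease would otherwise guarantee near the boundary; concretely, the admissible disturbance magnitudes must be tuned to the geometry of the region of attraction, which is constrained through $\rho \le Nl + \tau$, and must keep the perturbed successor inside the domain $\mathcal{X}_N$ where the nominal Lyapunov properties hold. This coupling between disturbance bounds and level-set geometry is the delicate point. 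A secondary subtlety is that $\kappa_{\texttt{ODT-LP}}$ may itself be discontinuous across leaf boundaries, but this poses no difficulty, since continuity is only ever invoked for $V_N^0$ and $\kappa_N$, while the \texttt{ODT-LP} policy enters solely through the uniform error bound $e_{DT,max}$.
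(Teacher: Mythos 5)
Your proposal follows the same overall architecture as the paper's proof: $V_N^0$ serves as the ISS-Lyapunov function, Proposition~\ref{prop:bounded} converts the distance between the perturbed and nominal successors into a $\mathcal{K}_\infty$ bound on the Lyapunov difference, the constants $\delta_1,\delta_2,\delta_3$ are tuned so that the level set $\mathcal{S}$ is robustly positively invariant, and Proposition~\ref{prop:subadd} splits the combined disturbance into the separate $\alpha_e,\alpha_w,\alpha_{DT}$ terms before invoking the ISS-Lyapunov-implies-ISS result. The one substantive difference is the coordinate choice for the perturbed dynamics. The paper propagates the \emph{estimated} state, $\hat{x}^+ = A\hat{x} + B\kappa_N(\hat{x}) + Be_{DT}(\hat{x}) + w - Ae + e^+$, so the estimation errors enter the disturbance vector linearly through $-Ae + e^+$ and no Lipschitz property of $\kappa_N$ is needed at this step; this is also why the theorem constrains $|e_{k+1}|$ and states the bound on $\phi_d(k;\hat{x})$. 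You instead propagate the \emph{true} state and absorb the estimation error via $|\kappa_N(\hat{x})-\kappa_N(x)| \leq K_{\max}|e|$, which is equally valid (Lipschitz continuity of $\kappa_N$ follows from Corollary~\ref{cor:empc}) and keeps the disturbance to three terms rather than four. What your route leaves unaddressed is the resulting bookkeeping mismatch: your descent and invariance statements live on level sets of $V_N^0(x)$, while the theorem quantifies over $\hat{x} \in \mathcal{S} = \text{lev}_\rho\, V_N^0(\hat{x})$ and bounds the $\hat{x}$-trajectory, so a final conversion (inflating or deflating the level set by a $\mathcal{K}$-function of $\delta_1$ and adding an $|e_k|$ term) is needed --- standard, but it should be stated. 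Also, your invariance step is only sketched; the paper's concrete device is the two-region split of $\mathcal{S}$: for $\rho/2 \leq V_N^0(\hat{x}) \leq \rho$ the nominal decrease satisfies $-c_1|\hat{x}|^2 \leq -c_1\rho/(2c_2)$ and dominates the perturbation, while for $V_N^0(\hat{x}) \leq \rho/2$ the perturbation $\sigma_2(|d|) \leq \rho/2$ cannot push the cost past level $\rho$; this is exactly the execution of the step you correctly flag as ``the delicate point.''
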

\begin{proof}    
    \textbf{Step 1: Robust Positive Invariance of \(\mathcal{S}\).} 
    The perturbed system dynamics with ODT-LP approximation error at time $k$ are:
    \[
    \hat{x}^+ = A\hat{x} + B\kappa_N(\hat{x}) + B(\kappa_{\texttt{ODT-LP}}(\hat{x}) - \kappa_N(\hat{x})) + w - Ae + e^+.
    \]
    Let $d(k) = [e_{DT}(\hat{x})^\top B^\top, {w}^\top, {e}^\top, {{e}^{+\top}}]^\top$ denote the combined disturbances. By Proposition \ref{prop:bounded}, there is $\sigma_1 \in \mathcal{K}_\infty$ such that:
    \begin{equation*}
        \begin{aligned}
            |V_N^0(\hat{x}^+) - V_N^0(\hat{x}^+_{\text{nom}})| & \leq \sigma_1\left(\hat{x}^+ - \hat{x}^+_{\text{nom}}\right) \\
            & \leq \sigma_1\left(|B||{e}_{DT}(\hat{x})| + |w| + |A||e| + |e^+|\right) \\
            & \leq \sigma_1\left((|A| + 3)|d|\right) =: \sigma_2(|d|),
        \end{aligned}
    \end{equation*}
    where \(\hat{x}^+_{\text{nom}} = A\hat{x} + B\kappa_N(\hat{x})\) is the nominal state.
    
    Partition \(\mathcal{S}\) into two regions: (1) For \(\rho/2 \leq V_N^0(\hat{x}) \leq \rho\), with the cost decrease property of nominal MPC, we have $V_N^0(\hat{x}^+) \leq V_N^0(\hat{x}) - c_1|\hat{x}|^2 + \sigma_2(|d|).$ Since $|\hat{x}|^2 \geq \frac{\rho}{2c_2} $, we require $\sigma_2(|d|) \leq \frac{\rho c_1}{2c_2}$ for $V_N^0(\hat{x}^+)\leq \rho$; (2) For \(V_N^0(\hat{x}) \leq \rho/2\), we have $V_N^0(\hat{x}^+) \leq \frac{\rho}{2} + \sigma_2(|d|),$ requiring \(\sigma_2(|d|) \leq \frac{\rho}{2}\).
    
    Let \(\delta_{\max} = \sigma_2^{-1}\left(\min\left\{\frac{\rho c_1}{2c_2}, \frac{\rho}{2}\right\}\right)\). Since $c_2 \geq c_1$,  \(\delta_{\max} = \sigma_2^{-1}(\frac{\rho c_1}{2c_2})\). Set \(\delta_1 = \delta_{\max}/4\), \(\delta_2 = \delta_{\max}/4\), and \(\delta_3 = \delta_{\max}/(4|B|)\). These ensure \(|d| \leq \delta_{\max}\) and \(\mathcal{S}\) robustly invariant.

    \textbf{Step 2: ISS-Lyapunov Function.}
    The optimal cost \(V_N^0\) in the robustly invariant set $\mathcal{S}$ is lower and upper bounded and satisfies $
    V_N^0(\hat{x}^+) - V_N^0(\hat{x}) \leq -c_1|\hat{x}|^2 + \sigma_2(|d|)$. 
    This fulfills the ISS-Lyapunov conditions (Definition \ref{def:iss_lyap}), and implies ISS of the system \cite{rawlingsModel2017}. Through Definition \ref{def:iss_lyap} and applying Proposition \ref{prop:subadd} to decompose \(\sigma_2(|d|)\):
    \begin{equation*}
        \begin{aligned}
            |\phi_d(k; \hat{x})| \leq & \beta(|\hat{x}|, k) + \alpha_e(6|{e}_{k+1}|) 
            \\  + & \alpha_w(3|{w}_k|) + \alpha_{DT}(3|B|{e}_{DT, max}),
        \end{aligned}
    \end{equation*}
    where \(\alpha_e\), \(\alpha_w\), and \(\alpha_{DT}\) are \(\mathcal{K}\)-functions derived from \(\sigma_2\). 
\end{proof}

Theorem~\ref{thm:odt-iss} guarantees input-to-state stability (ISS) for the \texttt{ODT-LP} controller with three key implications: (1) The ISS bound decomposes the total error into learning error, sampling error, and disturbances; (2) The error bound \({e}_{DT, max} \leq \delta_{DT} + K_{\text{max}} \frac{\sqrt{n}\delta_X}{2}\) demonstrates that controller stability can be enhanced through improved decision tree training algorithms or refined grid resolution; (3) The \texttt{ODT-LP}-based MPC controller inherits MPC's inherent robustness to disturbances under bounded approximation errors.

\section{Numerical Experiments}
We evaluate the \texttt{ODT-LP}-based MPC controller on two benchmark linear systems from \cite{bemporad_explicit_2002, summers_multi_2011}, comparing it against the original MPC controller, a Neural-Network (\texttt{NN}) based controller, and a Random Forest (\texttt{RF}) based controller with the same number of parameters. All models were implemented in Python; the \texttt{ODT-LP} and \texttt{NN} models were trained using \texttt{Pytorch}, while MPC problems was solved with \texttt{Pyomo} and \texttt{Ipopt}. The \texttt{NN}s employ ReLU activation and were trained with the Adam optimizer using the same squared error loss function as the \texttt{ODT-LP}. To ensure robust testing, we generated a 250,000-sample dataset for each case study using uniform grid sampling. For standardized comparison, all experiments were conducted on a Linux workstation with Intel 6226R CPU, Nvidia A6000 GPU and 251GB RAM. However, the actual hardware requirements are substantially lower in our tests, as the \texttt{ODT-LP} training requires a maximum of 4GB of GPU RAM, and online predictions are executed on the CPU.

\subsection{Case Study 1: A Two-State System}
We consider the MPC problem of a two-state single-input single-output system from \cite{bemporad_explicit_2002} in the following form:
\begin{equation} \label{eqn:mpc_case1}
    \small
    \begin{aligned}
    & \min_{\mathbf{u}} && x(2)^\top Px(2) + \sum_{k=0}^{1} \left[x(k)^\top Ix(k) + 0.01u(k)^2\right], \\
    & \text{s.t.} && x(k+1) = \begin{bmatrix} 0.7326 & -0.0861 \\ 0.1722 & 0.9909 \end{bmatrix}x(k) + \begin{bmatrix} 0.0609 \\ 0.0064 \end{bmatrix}u(k), \\
    & && -2 \leq u(k) \leq 2, \quad k = 0, 1, \\
    & && x(0) = x(t), P = A^\top PA + I.
    \end{aligned}
\end{equation}

A depth-two \texttt{ODT-LP} model is trained on a dataset of 100,000 samples generated from a uniform grid with a grid size of 0.01, as visualized in Fig.~\ref{fig:dt_case1}. The learned model exhibits equivalence to the explicit MPC control law derived in~\cite{bemporad_explicit_2002}. To quantify closed-loop performance, we define the metric $\Lambda = \frac{1}{20} \sum_{t=1}^{20} \|x(t) - x_s(t)\|_Q^2,$ representing the average state tracking error over a 20-step horizon. The trained \texttt{ODT-LP} model is deployed to solve  problem~\eqref{eqn:mpc_case1} online, initialized from states in the standardized test set and simulated for 20 time steps. Performance results, summarized in Table~\ref{tbl:case1}, confirm that the \texttt{ODT-LP}-based controller achieves \emph{exact learning} of the optimal MPC control law, with no measurable deviation in state trajectories or control actions. 
\begin{figure}[!ht]
    \centering
    \includegraphics[width=0.9\columnwidth]{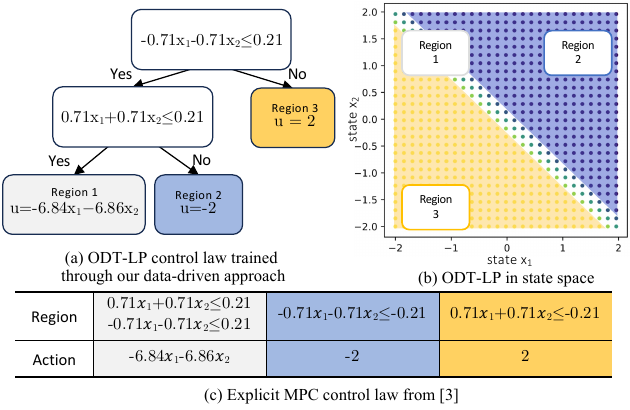}
    \vskip -0.1in
    \caption{Trained \texttt{ODT-LP} control law of case study 1.}
    \label{fig:dt_case1}
    \vskip -0.1in
\end{figure}

As for the online computation time, the \texttt{ODT-LP} model \textit{significantly reduces the online computation time} compared to the original MPC controller and the \texttt{NN}-based MPC controller. Comparing to the Explicit MPC, since this case study only has 3 regions, the average online computation time of the \texttt{ODT-LP} model is slightly higher than the Explicit MPC. However, the worst-case online computation time of the \texttt{ODT-LP} model is significantly lower than Explicit MPC, which is aligned with the analysis in Fig. \ref{fig:dt}. 
\begin{table}[!htp]
\vskip -0.1in
    \caption{Performance comparison on case study 1.}
    \label{tbl:case1}
    \setlength\tabcolsep{2pt}
    \renewcommand{\arraystretch}{1.2}
    \resizebox{\columnwidth}{!}{
    \begin{tabular}{c|cc|c|c|c|c}
        \hline
        Method & \multicolumn{1}{c|}{\begin{tabular}[c]{@{}c@{}}Training\\      Error\\      (RMSE)\end{tabular}} & \begin{tabular}[c]{@{}c@{}}Testing\\      Error\\      (RMSE)\end{tabular} & \begin{tabular}[c]{@{}c@{}}Closed-loop\\      Performance\\      ($\Lambda$)\end{tabular} & \begin{tabular}[c]{@{}c@{}}Performance \\      Loss to MPC\\      (\%)\end{tabular} & \begin{tabular}[c]{@{}c@{}}Online\\       Time\\      (average/worst, s)\end{tabular} & \begin{tabular}[c]{@{}c@{}}Average\\ Training\\ Time (s)\end{tabular} \\ \hline
        MPC & \multicolumn{1}{c|}{-} & - & 14.970 & - & 5.62E-2/3.90E-1 & - \\ \hline
        Explicit   MPC & \multicolumn{2}{c|}{3 regions} & 14.970 & 0.000 & 1.10E-5/3.84E-3 & - \\ \hline
        \begin{tabular}[c]{@{}c@{}}DT-MPC\\ (Depth = 2)\end{tabular} & \multicolumn{1}{c|}{1.78E-5} & 1.72E-4 & 14.970 & 0.000 & 2.32E-5/8.04E-4 & 158.01 \\ \hline
        \begin{tabular}[c]{@{}c@{}}NN-MPC\\ (2-3-3)\end{tabular} & \multicolumn{1}{c|}{3.15E-3} & 3.26E-3 & 14.972 & 0.013 & 9.00E-5/2.25E-2 & 20.52 \\ \hline
        RF-MPC & \multicolumn{1}{c|}{2.60E-03} & 6.12E-03 & 14.971 & 0.005 & 5.87E-3/1.45E-2 & 16.79 \\ \hline
    \end{tabular}
        }
\end{table}

\subsection{Case Study 2: A Four-State System}
We also consider a four-state MPC problem from \cite{summers_multi_2011}:
\begin{equation} \label{eqn:mpc_case2}
    \small
    \begin{aligned}
    & \min_{\mathbf{u}} && x(17)^\top Ix(17) + \sum_{k=0}^{16} \left[x(k)^\top Ix(k) + 0.2u(k)^2\right], \\
    & \text{s.t.} && x(k+1) = Ax(k) + Bu(k), x(0) = x(t), \\
    & && -0.2 \leq u(k) \leq 0.2, \quad k = 0, 1, \ldots, 16, \\
    & where && A = \begin{bmatrix}
        0.4035 & 0.3704 & 0.2935 & -0.7258 \\
        -0.2114 & 0.6405 & -0.6717 & -0.0420 \\
        0.8368 & 0.0175 & -0.2806 & 0.3808 \\
        -0.0724 & 0.6001 & 0.5552 & 0.4919
        \end{bmatrix}, \\
    & && B = \begin{bmatrix} 1.6124 & 0.4086 & -1.4512 & -0.6761 \end{bmatrix}^T.
    \end{aligned}
\end{equation}

\begin{table}[!htp]
    \caption{Performance comparison on case study 2.}
    \label{tbl:case2}
    \setlength\tabcolsep{2pt}
    \renewcommand{\arraystretch}{1.3}
    \resizebox{1.02\columnwidth}{!}{
        \begin{tabular}{c|cc|c|c|c|c}
            \hline
            Method & \multicolumn{1}{c|}{\begin{tabular}[c]{@{}c@{}}Training\\      Error\\      (RMSE)\end{tabular}} & \begin{tabular}[c]{@{}c@{}}Testing\\      Error\\      (RMSE)\end{tabular} & \begin{tabular}[c]{@{}c@{}}Closed-loop\\      Performance\\      ($\Lambda$)\end{tabular} & \begin{tabular}[c]{@{}c@{}}Performance \\      Loss to MPC\\      (\%)\end{tabular} & \begin{tabular}[c]{@{}c@{}}Online\\       Time\\      (average/worst, s)\end{tabular} & \begin{tabular}[c]{@{}c@{}}Average\\ Training\\ Time (s)\end{tabular} \\ \hline
            MPC & \multicolumn{1}{c|}{-} & - & 218.10 & 0 & 6.72E-2/3.02E+1 & - \\ \hline
            Explicit   MPC & \multicolumn{2}{c|}{58,227 regions} & 218.10 & 0 & 7.26E-1/1.83E+0 & 15337.58 \\ \hline
            \begin{tabular}[c]{@{}c@{}}DT-MPC\\ (Depth = 8)\end{tabular} & \multicolumn{1}{c|}{1.39E-02} & 1.78E-02 & 219.11 & 0.46 & 6.40E-5/1.24E-4 & 2257.74 \\ \hline
            \begin{tabular}[c]{@{}c@{}}NN-MPC\\ (40-40-20-10)\end{tabular} & \multicolumn{1}{c|}{6.59E-03} & 4.72E-03 & 218.68 & 0.26 & 1.63E-4/1.80E-3 & 83.79 \\ \hline
            RF-MPC & \multicolumn{1}{c|}{7.97E-03} & 2.16E-02 & 219.86 & 0.81 & 7.78E-3/1.91E-2 & 6.27 \\ \hline
        \end{tabular}
        }
\end{table}

The state space is uniformly sampled with {100,000 points and 0.02 grid size} to train a depth-eight \texttt{ODT-LP} model. As summarized in Table~\ref{tbl:case2}, the \texttt{ODT-LP}-based controller closely approximates the MPC control law, exhibiting a minimal performance loss of {0.46\%} relative to the original MPC controller. The closed-loop response of the \texttt{ODT-LP}-based controller, illustrated in Fig.~\ref{fig:response_case2}, further confirms its ability to track the original MPC's behavior with high accuracy. Note that the explicit MPC control law consists of 58,227 regions, requiring at least a depth-16 decision tree for exact representation. While deeper trees could, in theory, achieve an \textit{exact representation}, increasing depth introduces trade-offs: deeper trees are inherently {less interpretable} and {more computationally demanding} to train, necessitating larger datasets and advanced solvers. These challenges underscore the advantage of \texttt{ODT-LP} to provide a \textit{high-fidelity approximation} even with constrained tree depths, effectively balancing {prediction accuracy}, {computational efficiency}, and control law interpretability. The \texttt{ODT-LP}-based MPC controller requires significantly less online computation time than the original, explicit, and \texttt{NN}-based MPC controllers, which is consistent with the analysis in Fig.~\ref{fig:dt}.

\begin{figure}[!htp]
    \centering
    \includegraphics[width=0.9\columnwidth]{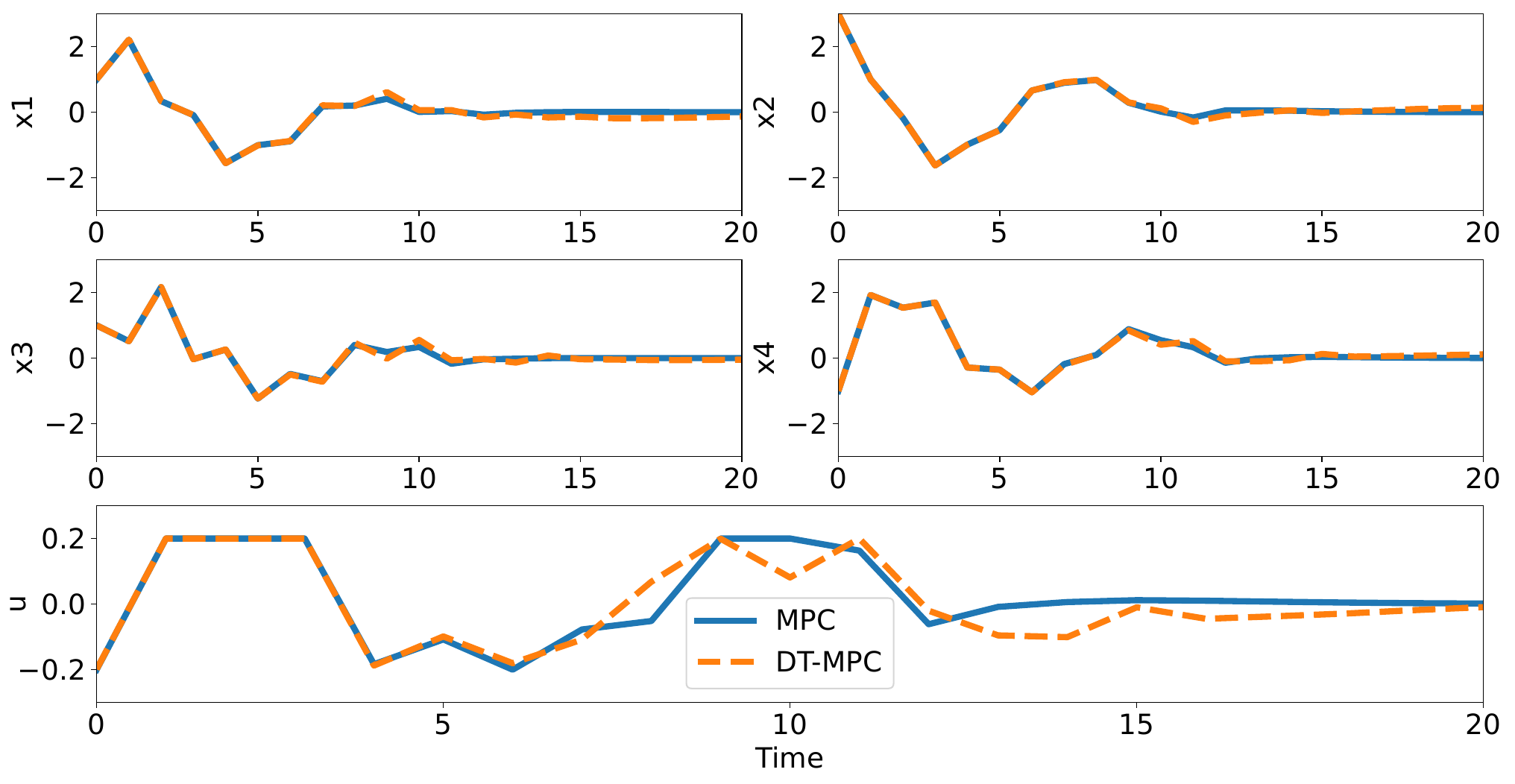}
    \caption{Closed-loop response of \texttt{ODT-LP}-based MPC controller.}
    \label{fig:response_case2}
\end{figure}

\section{Conclusion} 
In conclusion, we have introduced a novel data-driven framework for the exact learning of Linear Model Predictive Control law using oblique decision trees with linear predictions. We established the equivalence between \texttt{ODT-LP} and the Linear MPC control law, presented the training procedure with the uniform grid sampling method for data generation and the gradient-based solver for decision tree training, and analyzed the robustness of the \texttt{ODT-LP}-based MPC controller. Benchmark evaluations on linear systems demonstrate that the proposed approach effectively approximates Linear MPC control laws while reducing online computation time by orders of magnitude. The proposed approach addresses critical limitations of existing methods: it avoids the exponential complexity of explicit MPC and the interpretability challenges of neural networks. The transparent architecture—featuring verifiable hierarchical splits and interpretable linear leaf models—enables controller verification and debugging. Future work will focus on extending the proposed approach to nonlinear systems, as well as integrating with reinforcement learning. To enhance real-world applicability where collecting a uniformly gridded dataset may not be feasible, we will also investigate advanced sampling strategies. Methodologies like sparse grids can reduce the required number of training samples, while reinforcement learning can potentially eliminate the need for uniform sampling by allowing the model to learn from online interactions.


\bibliographystyle{ieeetr}
\bibliography{dtmpc_reference}

\end{document}